\newtheorem{theorem}{Theorem}
\newtheorem{lemma}{Lemma}[section]
\newtheorem{corollary}{Corollary}[section]
\theoremstyle{definition}
\theoremstyle{remark}
\newtheorem{remark}{Remark}[section]
\newtheorem{example}{Example}[section]
\newtheorem*{rep@theorem}{\rep@title}
\newcommand{\newreptheorem}[2]{%
\newenvironment{rep#1}[1]{%
 \def\rep@title{#2 \ref{##1}}%
 \begin{rep@theorem}}%
 {\end{rep@theorem}}}
\numberwithin{equation}{section}
\newcommand{\eps}{\epsilon}
\newcommand{\norm}[1]{\lVert{#1}\rVert}
\def\R{\mathbb{R}}
\newcommand{\bb}{\mathrm{b}}
\renewcommand{\ln}{\log\mathrm{L}} 
\begin{document}

\begin{frontmatter}

  \title{Laplace approximation in 
    high-dimensional Bayesian regression} 
\runtitle{Laplace approximation in high-dimensional regression}

\author{\fnms{Rina Foygel}
  \snm{Barber}\corref{}\ead[label=e1]{rina@uchicago.edu}} 
\address{Department of Statistics\\ The University of Chicago\\
  Chicago, IL 60637, U.S.A.\\ \printead{e1}}
\author{\fnms{Mathias} \snm{Drton}\ead[label=e2]{md5@uw.edu}}
\address{Department of Statistics\\ University of Washington\\
  Seattle, WA 98195, U.S.A.\\ \printead{e2}}
\and 
\author{\fnms{Kean Ming} \snm{Tan}\ead[label=e3]{keanming@uw.edu}}
\address{Department of Biostatistics\\ University of Washington\\
  Seattle, WA 98195, U.S.A.\\ \printead{e3}}


\runauthor{R.F.~Barber, M.~Drton, K.M.~Tan}

\begin{abstract}
  We consider Bayesian variable selection in sparse high-dimensional
  regression, where the number of covariates $p$ may be large relative
  to the sample size $n$, but at most a moderate number $q$ of
  covariates are active.  Specifically, we treat generalized linear
  models.  For a single fixed sparse model with well-behaved prior
  distribution, classical theory proves that the Laplace approximation
  to the marginal likelihood of the model is accurate for sufficiently
  large sample size $n$.  We extend this theory by giving results on
  uniform accuracy of the Laplace approximation across all models
  in a high-dimensional scenario in which $p$ and $q$, and
  thus also the number of considered models, may increase with $n$.
  Moreover, we show how this connection between marginal likelihood
  and Laplace approximation can be used to obtain consistency results
  for Bayesian approaches to variable selection in high-dimensional
  regression.
\end{abstract}


\begin{keyword}
\kwd{Bayesian inference}
\kwd{generalized linear models}
\kwd{Laplace approximation}
\kwd{logistic regression}
\kwd{model selection}
\kwd{variable selection}
\end{keyword}



\end{frontmatter}

\section{Introduction}
\label{sec:introduction}

A key issue in Bayesian approaches to model selection is the
evaluation of the marginal likelihood, also referred to as the evidence,
of the different models that are being considered.  While the marginal
likelihood may sometimes be available in closed form when adopting
suitable priors, most problems require approximation techniques.  In
particular, this is the case for variable selection in generalized
linear models such as logistic regression, which are the models
treated in this paper.  Different strategies to approximate the
marginal likelihood are reviewed by \cite{friel:2012}.  Our focus will
be on the accuracy of the Laplace approximation that is derived from
large-sample theory; see also \citet[Section 4.4]{Bishop2006}.

Suppose we have $n$ independent observations of a response variable,
and along with each observation we record a collection of $p$
covariates.  Write $L(\beta)$ for the likelihood function of a
generalized linear model relating the response to the covariates,
where $\beta\in\mathbb{R}^p$ is a vector of coefficients in the linear
predictor \citep{pmcc}.  Let $f(\beta)$ be a prior distribution, and
let $\hat\beta$ be the maximum likelihood estimator (MLE) of the
parameter vector $\beta\in\mathbb{R}^p$.  Then the evidence for the
(saturated) regression model is the integral
\begin{equation*}
  \label{eq:evidence}
  \int_{\mathbb{R}^p} L(\beta) f(\beta) \;d\beta,
\end{equation*}
and the Laplace approximation is the estimate
\begin{equation*}
\label{Eq:posterior}
\text{Laplace} \;:=\;  L (\hat{\beta})
    f(\hat{\beta}) \left(\frac{(2\pi)^{p}}{\det H
        (\hat{\beta})}\right)^{1/2},
\end{equation*}
where $H$ denotes the negative Hessian of the log-likelihood function
$\log L$.  

Classical asymptotic theory for large sample size $n$ but fixed number
of covariates $p$ shows that the Laplace approximation is accurate
with high probability \citep{haughton1988choice}.  With $p$ fixed,
this then clearly also holds for variable selection problems in which
we would consider every one of the finitely many models given by the
$2^p$ subsets of covariates.  This accuracy result justifies the use
of the Laplace approximation as a proxy for an actual model evidence.
The Laplace approximation is also useful for proving
frequentist consistency results about Bayesian methods for variable
selection for a general class of priors.  This is again discussed in
\citet{haughton1988choice}.  The ideas go back to the work of
\citet{schwarz1978estimating} on the Bayesian information criterion
(BIC).

In this paper, we set out to give analogous results on the interplay
between Laplace approximation, model evidence, and frequentist
consistency in variable selection for regression problems that are
high-dimensional, possibly with $p>n$, and sparse in that we consider
only models that involve small subsets of covariates.  We denote 
$q$ as an upper bound on the number of active covariates.  In
variable selection for sparse high-dimensional regression, the number
of considered models is very large, on the order of $p^q$.  Our
interest is then in bounds on the approximation error of Laplace
approximations that, with high probability, hold uniformly across all
sparse models.  Theorem~\ref{thm:LaplaceApproximation}, our main
result, gives such uniform bounds (see Section~\ref{sec:bayesian}).
A numerical experiment supporting the theorem is described in
Section~\ref{subsec:experiment}.

In Section~\ref{sec:consistency}, we show that when adopting suitable
priors on the space of all sparse models, model selection by
maximizing the product of model prior and Laplace approximation is
consistent in an asymptotic scenario in which $p$ and $q$ may grow
with $n$.  As a corollary, we obtain a consistency result for fully
Bayesian variable selection methods.  We note that the class of priors
on models we consider is the same as the one that has been used to
define extensions of BIC that have consistency properties for
high-dimensional variable selection problems \citep[see, for
example,][]{bogdan2004modifying,chen2008extended,zak2011modified,chen2012extended,frommlet2012modified,LuoChen2013,luo2014extended,Rina2014Ising}.
The prior has also been discussed by \cite{scott2010bayes}.


\section{Setup and assumptions}
\label{SetupandAssumptions}

In this section, we provide the setup for the studied problem and the
assumptions needed for our results.

\subsection{Problem setup}
\label{sec:setup}

We treat generalized linear models for $n$ independent observations of
a response, which we denote as $Y_1,\dots,Y_n$.  Each observation
$Y_i$ follows a distribution from a univariate exponential family with
density
\[
p_\theta(y)\,\propto\, \exp\left\{y \cdot
  \theta-\bb(\theta)\right\}\,, \qquad \theta\in\R,
\]
where the density is defined with respect to some measure on $\R$.
Let $\theta_i$ be the (natural) parameter indexing the distribution of
$Y_i$, so $Y_i\sim p_{\theta_i}$.  The vector
$\boldsymbol{\theta}=(\theta_1,\dots,\theta_n)^T$ is then assumed to
lie in the linear space spanned by the columns of a design matrix
$X=(X_{ij})\in\R^{n\times p}$, that is, $\boldsymbol{\theta} = X\beta$
for a parameter vector $\beta\in\R^p$.  Our work is framed in a
setting with a fixed/deterministic design $X$.  In the language of
\cite{pmcc}, our basic setup uses a
canonical link, no dispersion parameter and an exponential family whose
natural parameter space is the entire real line.  This covers, for instance, logistic and Poisson regression.  However, extensions beyond this setting are possible; see for instance the related work of
\cite{LuoChen2013} whose discussion of Bayesian information criteria encompasses
other link functions.

We write $X_i$ for the $i$th row of $X$, that is, the $p$-vector of
covariate values for observation $Y_i$.  The regression model for the
responses then has log-likelihood, score, and negative Hessian
functions
\begin{align*}
  \log L(\beta)&=\sum_{i=1}^n Y_i\cdot X_i^T\beta - 
  \mathrm{b}(X_i^T\beta)\;\in\;\R\;,\\
  s(\beta)&=\sum_{i=1}^n X_i \left(Y_i - 
    \mathrm{b}'(X_i^T\beta)\right)\;\in\;\R^p\;,\\
  H(\beta)&=\sum_{i=1}^n X_iX_i^T\cdot 
  \mathrm{b}''(X_i^T\beta)\;\in\;\R^{p\times p}\;.
\end{align*}
The results in this paper rely on conditions on the Hessian $H$, and
we note that, implicitly, these are actually conditions on the design $X$.  

We are concerned with a sparsity scenario in which the joint
distribution of $Y_1,\dots,Y_n$ is determined by a true parameter
vector $\beta_0\in\R^p$ supported on a (small) set
$J_0\subset[p]:=\{1,\dots,p\}$, 
that is, $\beta_{0j}\not=0$ if and only if $j\in J_0$.  Our interest
is in the recovery of the set $J_0$ when knowing an upper bound $q$ on the
cardinality of $J_0$, so $|J_0|\leq q$.  To this end, we consider the
different submodels given by the linear spaces spanned by subsets
$J\subset[p]$ of the columns of the design matrix $X$, where $|J|\le
q$.  

For notational convenience, we take $J\subset[p]$ to mean either an index
set for the covariates or the resulting regression model.  The regression
coefficients in model $J$ form a vector of length $|J|$.  We index
such vectors $\beta$ by the elements of $J$, that is,
$\beta=(\beta_j:j\in J)$, and we write $\R^J$ for the Euclidean space
containing all these coefficient vectors.  This way the coefficient
and the covariate it belongs to always share a common index.  In other
words, the coefficient for the $j$-th coordinate of covariate vector
$X_i$ is denoted by $\beta_j$ in any model $J$ with $j\in J$.
Furthermore, it is at times convenient to identify a vector
$\beta\in\R^J$ with the vector in $\R^p$ that is obtained from $\beta$
by filling in zeros outside of the set $J$.  As this is clear from the
context, we simply write $\beta$ again when referring to this sparse
vector in $\R^p$.  Finally, $s_J(\beta)$ and $H_J(\beta)$ denote the
subvector and submatrix of $s(\beta)$ and $H(\beta)$, respectively,
obtained by extracting entries indexed by $J$.  These depend only on
the subvectors $X_{iJ}=(X_{ij})_{j\in J}$ of the covariate vectors
$X_i$.

\subsection{Assumptions}
\label{sec:assumptions}

Recall that $n$ is the sample size, $p$ is the number of 
covariates, $q$ is an upper bound on the model size, and $\beta_0$ is the true
parameter vector.  We assume the following conditions to hold for all
considered regression problems: 
\begin{enumerate}[label=(A\arabic*)]
\item \label{ass:A2} The Euclidean norm of the true signal is bounded,
  that is, $\|\beta_0\|_2 \le a_0$ for a fixed constant $a_0\in(0,\infty)$.
\item \label{ass:A3} There is a decreasing function
  $c_{\mathrm{lower}}:[0,\infty)\rightarrow (0,\infty)$ and an
  increasing function $c_{\mathrm{upper}}:[0,\infty)\rightarrow
  (0,\infty)$ such that for all $J\subset[p]$ with $|J|\leq 2q$ and
  all $\beta\in\R^J$, the Hessian of the negative log-likelihood
  function is bounded as
  \begin{equation*}
    \label{A3-1}
    c_{\mathrm{lower}}(\norm{\beta}_2)\mathbf{I}_J\preceq 
    \frac{1}{n}H_J(\beta)\preceq 
    c_{\mathrm{upper}}(\norm{\beta}_2)\mathbf{I}_J.
  \end{equation*}
\item \label{ass:A4} There is a constant
  $c_\text{change}\in(0,\infty)$ such that for all $J\subset[p]$ with $|J|
  \le 2q$ and all $\beta,\beta' \in \mathbb{R}^J$,
\begin{equation*}
\label{A3-2}
\frac{1}{n} \|H_J(\beta) - H_J(\beta')\|_{\text{sp}} \le c_\text{change} \cdot \|\beta-\beta'\|_2,
\end{equation*}
where $\|\cdot\|_{\text{sp}}$ is the spectral norm of a matrix.
\end{enumerate}

Assumption \ref{ass:A3} provides control of the spectrum of the
Hessian of the negative log-likelihood function, and \ref{ass:A4}
yields control of the change of the Hessian.  Together, \ref{ass:A3}
and \ref{ass:A4} imply that for all $\epsilon>0$, there is a $\delta
>0$ such that
\begin{equation}
\label{Eq:A3-3}
(1-\epsilon) H_J(\beta_0) \preceq H_J(\beta_J) \preceq (1+\epsilon)H_J(\beta_0),
\end{equation}
for all $J \supseteq J_0$ with $|J|\le 2q$ and
$\beta_J \in \mathbb{R}^J$ with $\|\beta_J-\beta_0\|_2 \le \delta$;
see Prop.~2.1 in \cite{Rina2014Ising}.  Note also that we consider
sets $J$ with cardinality $2q$ in \ref{ass:A3} and \ref{ass:A4}
because it allows us to make arguments concerning false models, with
$J\not\supseteq J_0$, using properties of the true model given by the
union $J\cup J_0$.

\begin{remark}
  When treating generalized linear models, some
  control of the size of the true coefficient vector $\beta_0$ is
  indeed needed.  For instance, in logistic regression, if the norm of
  $\beta_0$ is too large, then the binary response will take on one of
  its values with overwhelming probability.  Keeping with the setting
  of logistic regression, \cite{Rina2014Ising} show how
  assumptions~\ref{ass:A3} and~\ref{ass:A4} hold with high probability
  in certain settings in which the covariates are generated as
  i.i.d.~sample.  Assumptions~\ref{ass:A3} and~\ref{ass:A4}, or the
  implication from~(\ref{Eq:A3-3}), also appear in earlier work on
  Bayesian information criteria for high-dimensional problems such
  as~\cite{chen2012extended} or \cite{LuoChen2013}.
\end{remark}

Let $\{f_J : J \subset [p], |J|\le q \}$ be a family of probability
density functions $f_J:\mathbb{R}^J\to[0,\infty)$ that we use to
define prior distributions in all $q$-sparse models.  We say that the
family is log-Lipschitz with respect to radius $R>0$ and has bounded
log-density ratios if there exist two constants $ F_1, F_2\in [0,\infty)$
such that the following conditions hold for all $J\subset [p]$ with
$|J|\le q$:
\begin{enumerate}[label=(B\arabic*)]
\item \label{ass:B1} The function $\log f_J$ is $F_1$-Lipschitz on the
  ball $ B_R(0)=\{\beta\in\mathbb{R}^J : \| \beta \|_2 \le R \}$, i.e., for all
  $\beta',\beta \in B_R(0)$, we have
  \[
  |\log f_J (\beta') - \log f_J (\beta)| \le F_1 \|\beta'
  -\beta \|_2.
  \]    
\item \label{ass:B2} For all $\beta \in \mathbb{R}^J$, 
  \[
  \log f_J (\beta) - \log f_J ({0}) \le F_2.
  \]
\end{enumerate}

\begin{example}
  If we take $f_J$ to be the density of a $|J|$-fold product of a
  centered normal distribution with variance $\sigma^2$, then
  \ref{ass:B1} holds with $F_1=R/\sigma^2$ and $F_2=0$.
\end{example}


\section{Laplace approximation}
\label{sec:bayesian}

This section provides our main result.  For a high-dimensional
regression problem, we show that a Laplace approximation to the
marginal likelihood of each sparse model,
\[\mathrm{Evidence}(J):=\int_{\R^J} L (\beta) f_J
                     (\beta) d \beta\;,\]
                      leads to
an approximation error that, with high probability, is bounded uniformly
across all models.  To state our result, we adopt the notation
\[
a=b(1\pm c)
\quad:\!\iff\quad
a\in [b(1-c),b(1+c)].
\]


\begin{theorem}
  \label{thm:LaplaceApproximation}
  Suppose conditions \ref{ass:A2}--\ref{ass:A4} hold.
  Then, there are
  constants $\nu,c_{\mathrm{sample}},a_{\mathrm{MLE}}\in(0,\infty)$
  depending only on
  $(a_0,c_{\mathrm{lower}},c_{\mathrm{upper}},c_{\mathrm{change}})$
  such that if
  \[
  n \ge
  c_{\mathrm{sample}}\cdot  q^3\max\{\log(p),\log^3(n)\} ,
  \]
  then with probability at least $1-p^{-\nu}$ the following two
  statements are true for all sparse models $J\subset[p]$, $|J|\le q$:\smallskip
  
\noindent(i)\quad The MLE $\hat\beta_J$  satisfies $\| \hat{\beta}_J \|_2 \le a_{\mathrm{MLE}}$.\smallskip

\noindent(ii)\quad If additionally the family of prior densities 
  $\{f_J : J \subset [p], |J|\le q \}$ satisfies the
  Lipschitz condition from \ref{ass:B1} for radius $R\geq a_{\mathrm{MLE}}+1$, and has
  log-density ratios bounded as in \ref{ass:B2},
then there is a
    constant $c_{\mathrm{Laplace}}\in(0,\infty)$ depending only on 
    $(a_0,c_{\mathrm{lower}},c_{\mathrm{upper}},c_{\mathrm{change}},F_1,F_2)$
     such that
\[    \mathrm{Evidence}(J) =  L (\hat{\beta}_J)
    f_J(\hat{\beta}_J) \cdot \left(\frac{(2\pi)^{|J|}}{\det H_{J}
        (\hat{\beta}_J)}\right)^{1/2} \cdot \left( 1 \pm
      c_{\mathrm{Laplace}}\sqrt{\frac{|J|^3 \log ^3 
          (n)}{n}} \right).  \]
\end{theorem}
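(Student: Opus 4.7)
My plan is to handle part~(i) first via a convexity-plus-concentration argument, then, conditioning on that event, to carry out a uniform Laplace approximation of the classical inner-ball/outer-ball type, with error constants furnished by~\ref{ass:A3}--\ref{ass:A4} and~\ref{ass:B1}--\ref{ass:B2}.

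For part~(i), I would fix a model $J$ with $|J|\le q$, pass to the enlarged model $J':=J\cup J_0$ with $|J'|\le 2q$, and use the mean-zero score $s_{J'}(\beta_0)=\sum_i X_{iJ'}(Y_i-b'(X_i^T\beta_0))$, whose summands are centered and sub-Gaussian. A Bernstein/Hoeffding-type bound on $\|s_{J'}(\beta_0)\|_2$ combined with a union bound over the at most $p^{2q}$ candidate supersets of $J_0$ yields $\max_{J'}\|s_{J'}(\beta_0)\|_2=O(\sqrt{nq\log p})$ with probability at least $1-p^{-\nu}$. Coupled with the identity
\[
\langle s_{J'}(\beta_0),\hat\beta_{J'}-\beta_0\rangle = \int_0^1(\hat\beta_{J'}-\beta_0)^T H_{J'}\!\bigl(\beta_0+t(\hat\beta_{J'}-\beta_0)\bigr)(\hat\beta_{J'}-\beta_0)\,dt
\]
and the lower Hessian bound from~\ref{ass:A3}, Cauchy--Schwarz forces $\|\hat\beta_{J'}-\beta_0\|_2=O(\sqrt{q\log(p)/n})$, so $\|\hat\beta_{J'}\|_2$ is bounded. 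A parallel argument handles arbitrary $J$ by introducing the population minimizer $\tilde\beta_J:=\arg\min_{\beta\in\R^J}\mathbb{E}[-\log L(\beta)]$, bounding $\|\tilde\beta_J\|_2$ deterministically via convexity of $\mathbb{E}[-\log L]$ and~\ref{ass:A3}, then using empirical-process concentration to show $\|\hat\beta_J-\tilde\beta_J\|_2=O(\sqrt{q\log(p)/n})$. Under the stated sample-size hypothesis this yields $\|\hat\beta_J\|_2\le a_{\mathrm{MLE}}$ uniformly.

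For part~(ii), condition on the event from~(i). Set $r=C\sqrt{|J|\log(n)/n}$ and split
\[
\mathrm{Evidence}(J)=\int_{B_r(\hat\beta_J)}L(\beta)f_J(\beta)\,d\beta+\int_{\R^J\setminus B_r(\hat\beta_J)}L(\beta)f_J(\beta)\,d\beta.
\]
Inside $B_r(\hat\beta_J)$, a Taylor expansion around $\hat\beta_J$ uses $s_J(\hat\beta_J)=0$ and~\ref{ass:A4} to write
\[
\log L(\beta)=\log L(\hat\beta_J)-\tfrac{1}{2}(\beta-\hat\beta_J)^T H_J(\hat\beta_J)(\beta-\hat\beta_J)+R(\beta),\qquad |R(\beta)|\le\tfrac{c_{\mathrm{change}}n}{6}\|\beta-\hat\beta_J\|_2^3,
\]
where the cubic remainder follows from the integral form of Taylor's theorem together with~\ref{ass:A4}. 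Because $B_r(\hat\beta_J)\subset B_R(0)$ for $n$ large, assumption~\ref{ass:B1} gives $f_J(\beta)/f_J(\hat\beta_J)=1+O(F_1 r)$, and the truncated Gaussian integral differs from the full Gaussian $(2\pi)^{|J|/2}/\sqrt{\det H_J(\hat\beta_J)}$ by an exponentially small tail (eigenvalues of $H_J(\hat\beta_J)$ exceed $c_{\mathrm{lower}}(a_{\mathrm{MLE}})\cdot n$ by~\ref{ass:A3}, and $r\sqrt{n}$ dominates $\sqrt{|J|\log n}$ by the choice of $C$). Combining the three error sources, the inner integral equals $L(\hat\beta_J)f_J(\hat\beta_J)(2\pi)^{|J|/2}/\sqrt{\det H_J(\hat\beta_J)}$ times $1\pm O(nc_{\mathrm{change}}r^3+F_1 r)=1\pm O(\sqrt{|J|^3\log^3(n)/n})$, with the cubic Taylor remainder dominating the Lipschitz contribution.

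Outside $B_r(\hat\beta_J)$, the strong concavity of $\log L$ (via~\ref{ass:A3} together with the perturbation bound~(\ref{Eq:A3-3})) yields $\log L(\beta)-\log L(\hat\beta_J)\le -cn\min\{r\|\beta-\hat\beta_J\|_2,\|\beta-\hat\beta_J\|_2^2\}$, and bounding $f_J(\beta)\le e^{F_2}f_J(0)$ via~\ref{ass:B2} shows the outer contribution is smaller than the main term by a factor $n^{-c|J|}$ (for $C$ sufficiently large), hence absorbed into the stated error. A final union bound over all $J$ with $|J|\le q$ preserves the $1-p^{-\nu}$ probability. I expect the main obstacle to lie in part~(i): establishing sub-Gaussianity of the residuals $Y_i-b'(X_i^T\beta_0)$ in the general GLM setting (where $b'$ need not be uniformly Lipschitz), and extending the $J'\supseteq J_0$ analysis to arbitrary $J$ via the population-minimizer detour, while keeping concentration constants uniform in $J$ so that a single union bound over the at most $p^q$ models delivers the $1-p^{-\nu}$ guarantee.
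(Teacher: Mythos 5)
Your part (ii) is essentially the paper's argument: the same inner/outer split around $\hat\beta_J$ at radius of order $\sqrt{|J|\log(n)/n}$, a quadratic Taylor expansion whose cubic remainder is controlled by \ref{ass:A4}, the Lipschitz prior bound \ref{ass:B1} on the inner ball, a truncated-Gaussian computation, and a concavity-based linear-decay bound plus \ref{ass:B2} outside. Two small omissions there: in the tail you still need \ref{ass:B1} once more to convert $f_J(0)$ back into $f_J(\hat\beta_J)$, and the decay outside the ball comes from plain concavity propagated outward from the boundary values of the quadratic approximation, not from ``strong concavity'' per se, since the curvature guaranteed by \ref{ass:A3} is only $nc_{\mathrm{lower}}(\|\beta\|_2)$ and may degenerate far from the MLE.

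The genuine gap is in part (i), precisely where you anticipated trouble. First, the residuals $Y_i-\mathrm{b}'(X_i^T\beta_0)$ are not sub-Gaussian for general exponential families (Poisson already fails); the usable statement is a bound on $\|H_{J}(\beta_0)^{-1/2}s_{J}(\beta_0)\|_2$ obtained from the exponential-family moment generating function $\exp\{\mathrm{b}(\theta+t)-\mathrm{b}(\theta)-t\,\mathrm{b}'(\theta)\}$ together with \ref{ass:A3}, which is what the cited Lemma B.1 of Barber and Drton supplies. Second, and more seriously, bounding $\|\tilde\beta_J\|_2$ ``deterministically via convexity of $\mathbb{E}[-\log L]$ and \ref{ass:A3}'' does not go through as stated: since $c_{\mathrm{lower}}$ is merely a positive decreasing function, it may be integrable, and a strictly convex function whose curvature decays that fast can have bounded gradient at the origin yet a minimizer at infinity. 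One needs \ref{ass:A4} to secure a fixed radius $c_{\mathrm{lower}}/(2c_{\mathrm{change}})$ of uniformly bounded-below curvature around a reference point, and that reference point must be $\beta_0$ viewed inside the enlarged true model $J\cup J_0$ (for a false $J$ there is no natural anchor inside $\R^J$ itself). The paper's route avoids the population minimizer altogether: it shows $\log L(0)-\log L(\beta_0)\ge -nc_1$, uses $\log L(\hat\beta_J)\ge\log L(0)$, and invokes the bound (\ref{eq:L0-comparison2}) in $\R^{J\cup J_0}$, which says $\log L(\beta)-\log L(\beta_0)$ decreases linearly at rate of order $n$ once $\|\beta-\beta_0\|_2$ exceeds $c_{\mathrm{lower}}/(2c_{\mathrm{change}})$; combining the two inequalities forces $\|\hat\beta_J-\beta_0\|_2$ to be bounded by a constant. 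Your skeleton for (i) can be repaired along these lines, but that repair is the substantive content of the step rather than a routine detail.
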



\begin{proof}
  {\em (i) Bounded MLEs.} It follows from
  \citet[Sect.~B.2]{Rina2014Ising}\footnote{In
 the proof of this theorem, we cite several results from \citet[Sect.~B.2, Lem.~B.1]{Rina2014Ising}.
Although that paper treats the specific case of logistic regression, 
by examining the proofs of their results that we
cite here, we can see that they hold more broadly for the general GLM case
as long as we assume that the Hessian conditions hold, i.e.,~Conditions~\ref{ass:A2}--\ref{ass:A4},
and therefore we may use these results for the setting considered here.
} that, with the claimed probability,
  the norms  $\|\hat\beta_J\|_2$ for true models $J$
  (i.e., $J\supseteq J_0$ and $|J|\le 2q$) are bounded by a constant.
  The result makes reference to an event for which all the claims we
  make subsequently are true.  The bound on the norm of an MLE of a
  true model was
  obtained by comparing the maximal likelihood to the likelihood at
  the true parameter $\beta_0$.  As we show now, for false sparse
  models, we may argue similarly but comparing to the likelihood at 0.

  Recall that $a_0$ is the bound on the norm of $\beta_0$
  assumed in~\ref{ass:A2} and that the functions $c_{\mathrm{lower}}$
  and $c_{\mathrm{upper}}$ in~\ref{ass:A3} are decreasing and
  increasing in the norm of $\beta_0$, respectively.  Throughout this
  part, we use the abbreviations
  \[
  c_{\mathrm{lower}} := c_{\mathrm{lower}}(a_0), \qquad
  c_{\mathrm{upper}} := c_{\mathrm{upper}}(a_0).
  \]
  
  First, we lower-bound the likelihood at 0 via a Taylor-expansion
  using the true model $J_0$.  For some $t\in[0,1]$, we have that
  \begin{align*}
    \log L(0) - \log L(\beta_0) 
    &= -\beta_0^Ts_{J_0}(\beta_0) -\frac{1}{2}
      \beta_0^TH_{J_0}(t\beta_0)\beta_0\;\ge\;
      -\beta_0^Ts_{J_0}(\beta_0) -\frac{1}{2} n\, 
      a_0^2c_{\mathrm{upper}} ,
  \end{align*}
  where we have applied~\ref{A3-1}.  Lemma B.1 in
  \cite{Rina2014Ising} yields that  
  \[
  |\beta_0^Ts_{J_0}(\beta_0)| \;\le\;
  \|H_{J_0}(\beta_0)^{-\frac{1}{2}}s_{J_0}(\beta_0)\|
  \|H_{J_0}(\beta_0)^{\frac{1}{2}} \beta_0\|
  \;\le\; \tau_0  a_0\sqrt{n c_{\mathrm{upper}}} ,
  \]
  where $\tau_0^2$ can be bounded by a constant multiple of
  $q\log(p)$.    By our sample size assumption (i.e., the
  existence of the constant $c_{\mathrm{sample}}$), we thus have that
  \begin{equation}
    \label{eq:L0-comparison1}
    \log L(0) - \log L(\beta_0) \;\ge\; -n \cdot c_1
  \end{equation}
  for some constant $c_1\in(0,\infty)$.  

  Second, we may consider the true model $J\cup J_0$ instead of $J$
  and apply (B.17) in \cite{Rina2014Ising} to obtain the bound
  \begin{multline}
    \label{eq:L0-comparison2}
    \log L(\hat\beta_J)-\log L(\beta_0) 
    \le \\
    \|\hat\beta_J-\beta_0\| \cdot\left(
      \sqrt{nc_{\mathrm{upper}}}  \cdot
      \tau_{J\setminus J_0} -\frac{n
        c_{\mathrm{lower}}}{4} \min\left\{
        \|\hat\beta_J-\beta_0\|,
        \frac{c_{\mathrm{lower}}}{2c_{\mathrm{change}}}\right\} \right),
  \end{multline}
  where $\tau_{J\setminus J_0}^2$ can be bounded by a constant multiple
  of $q\log(p)$.  Choosing our sample size constant
  $c_{\mathrm{sample}}$ large enough, we may deduce
  from~(\ref{eq:L0-comparison2}) that there is a constant
  $c_2\in(0,\infty)$ such that
  \[
  \log L(\hat\beta_J) - \log L(\beta_0) \le -n \|\hat\beta_J-\beta_0\|
  c_2
  \]
  whenever
  $\|\hat\beta_J-\beta_0\|>c_{\mathrm{lower}}/(2c_{\mathrm{change}})$.
  Using the fact that $\log L(0)\le \log(\hat\beta_J)$ for any model
  $J$, we may deduce from~(\ref{eq:L0-comparison2}) that there is a
  constant $c_2\in(0,\infty)$ such that
  \[
  \log L(0) - \log L(\beta_0) \le  -n \|\hat\beta_J-\beta_0\| c_2
  \]
  whenever
  $\|\hat\beta_J-\beta_0\|>c_{\mathrm{lower}}/(2c_{\mathrm{change}})$.
  Together with~(\ref{eq:L0-comparison1}), this implies that
  $\|\hat\beta_J-\beta_0\|$ is bounded by a constant $c_3$.  Having
  assumed \ref{ass:A2}, we may conclude that the norm of $\hat\beta_J$
  is bounded by $a_0+c_3$.

  \medskip
  
  {\em (ii) Laplace approximation.}  Fix $J\subset [p]$ with
  $|J|\le q$.  In
  order to analyze the evidence of model $J$, we split the integration
  domain $\mathbb{R}^J$ into two regions, namely, a neighborhood
  $\mathcal{N}$ of the MLE $\hat{\beta}_J$ and the complement
  $\mathbb{R}^J \backslash \mathcal{N}$.  More precisely, we choose
  the neighborhood of the MLE as
  \[
  \mathcal{N} := \left\{\, \beta\in\mathbb{R}^J \::\: \|H_{J}
    (\hat{\beta}_J)^{1/2} (\beta-\hat{\beta}_J)\|_2 \le \sqrt{5 |J|
      \log (n)} \,\right\}.
  \]
  Then the marginal likelihood, $\text{Evidence}(J)$, is the sum of the
  two integrals
  \begin{align*}
    \mathcal{I}_1 &=\int_{\mathcal{N}} L (\beta) f_J
                     (\beta) d \beta,\\
    \mathcal{I}_2 &=\int_{ \mathbb{R}^J \backslash
                     \mathcal{N}} L (\beta) f_J (\beta) d
                     \beta.
  \end{align*}
  We will estimate $\mathcal{I}_1$ via a quadratic approximation to
  the log-likelihood function.  Outside of the region $\mathcal{N}$,
  the quadratic approximation may no longer be accurate but due to
  concavity of the log-likelihood function, the integrand can be
  bounded by $e^{-c \|\beta_J - \hat{\beta}_J\|_2}$ for an
  appropriately chosen constant $c$, which allows us to show that
  $\mathcal{I}_2$ is negligible when $n$ is sufficiently large.

  We now approximate $\mathcal{I}_1$ and $\mathcal{I}_2$ separately.
  Throughout this part we assume that we have a bound
  $a_{\mathrm{MLE}}$ on the norms of the MLEs $\hat\beta_J$ in sparse
  models $J$ with $|J|\le q$.  For notational convenience, we now let
  \[
  c_{\mathrm{lower}} := c_{\mathrm{lower}}(a_{\mathrm{MLE}}), \qquad
  c_{\mathrm{upper}} := c_{\mathrm{upper}}(a_{\mathrm{MLE}}).
  \]

  {\em (ii-a) Approximation of integral $\mathcal{I}_1$.}  By a
  Taylor-expansion, for any $\beta\in\mathbb{R}^J$ there is a
  $t\in[0,1]$ such that
  \begin{align*}
    \log L(\beta)&= \log L(\hat\beta_J) 
     - \frac{1}{2} (\beta-\hat{\beta}_J)^T H_{J}\left(\hat{\beta}_J + t
      (\beta-\hat{\beta}_J)\right) (\beta-\hat{\beta}_J).
  \end{align*}
  By~\ref{ass:A4} and using that $|t|\le 1$,
  \begin{equation*}
    \left\|H_{J}\left(\hat{\beta}_J + t
        (\beta-\hat{\beta}_J)\right)-H_J(\hat{\beta}_J)\right\|_{\mathrm{sp}}
    \;\le\; 
    n \cdot c_{\mathrm{change}} \, \|\beta-\hat\beta_J\|_2.
  \end{equation*}
  Hence, 
  \begin{align}
        \label{Eq:mvtapproximation0}
    \log L(\beta) &= \log L(\hat\beta_J) - \frac{1}{2}
                    (\beta-\hat{\beta}_J)^T H_{J}(\hat{\beta}_J) 
  (\beta-\hat{\beta}_J) \pm \frac{1}{2}
  \|\beta-\hat{\beta}_J\|_2^3 \cdot n \,c_{\mathrm{change}}.
  \end{align}
  Next, observe that \ref{ass:A3} implies that
  \[
  H_{J}(\hat{\beta}_J)^{-1/2} \;\preceq\; \sqrt{\frac{1}{n
      c_{\mathrm{lower}}}} \cdot \mathbf{I}_J.
  \]
  We deduce that for any vector $\beta\in\mathcal{N}$, 
  \begin{equation}
    \label{Eq:beta bound}
    \|\beta -\hat{\beta}_J \|_2 \;\le\;   \sqrt{5|J|\log (n)}\, \|H_{J}
    (\hat{\beta}_J)^{-1/2}\|_{\mathrm{sp}} \;\le\; \sqrt{\frac{5|J| \log
        (n)}{nc_{\mathrm{lower}} }}. 
  \end{equation}
  This gives
  \begin{align}
        \label{Eq:mvtapproximation}
    \log L(\beta) 
    &= \log L(\hat\beta_J) - \frac{1}{2}
      (\beta-\hat{\beta}_J)^T H_{J}(\hat{\beta}_J) 
      (\beta-\hat{\beta}_J) 
      \pm\sqrt{\frac{|J|^3 \log^3 (n)}{n}}\cdot \sqrt{\frac{125
      c^2_{\mathrm{change}}}{4 c^3_{\mathrm{lower}}}}. 
  \end{align}

  Choosing the constant $c_{\mathrm{sample}}$ large enough, we can
  ensure that the upper bound in~(\ref{Eq:beta bound}) is no larger
  than 1.  In other words, $\|\beta-\hat{\beta}_J\|_2 \le 1$ for all
  points $\beta \in \mathcal{N}$.  By our assumption that
  $\|\hat{\beta}_J \|_2 \le a_{\mathrm{MLE}}$, the set $\mathcal{N}$
  is thus contained in the ball
  \[
  \mathcal{B}=\left\{\beta\in\mathbb{R}^J: \|\beta\|_2 \le
  a_{\mathrm{MLE}}+1\right\}.
  \]
  Since, by~\ref{ass:B1}, the logarithm of the prior density is
  $F_1$-Lipschitz on $\mathcal{B}$, it follows form~(\ref{Eq:beta bound}) that
  \begin{equation}
    \label{Eq:prior_bound}
    \log f_J (\beta) \;=\; \log f_J (\hat{\beta}_J)  \pm F_1
    \|\beta-\hat{\beta}_J\|_2 \;=\; \log f_J (\hat{\beta}_J)  \pm F_1
    \sqrt{\frac{5|J| \log (n)}{nc_{\mathrm{lower}} }}. 
  \end{equation}

  Plugging~(\ref{Eq:mvtapproximation}) and~(\ref{Eq:prior_bound}) into
  $\mathcal{I}_1$, and writing $a=b\cdot \exp\{\pm c\}$ to denote $a\in[b\cdot e^{-c},b\cdot e^c]$, we find that
  \begin{equation}
    \label{eq:laplace1}
    \begin{split}
      \mathcal{I}_1 &= L(\hat\beta_J)f_J (\hat{\beta}_J) \exp \left\{
        \pm\sqrt{\frac{|J|^3 \log^3 (n)}{n}}\cdot \left(
          \sqrt{\frac{5F_1^2}{nc_{\mathrm{lower}}}}
          +\sqrt{\frac{125 c^2_{\mathrm{change}}}{4
              c^3_{\mathrm{lower}}}} \right)    \right\} \\
      &\qquad \qquad \times \int_{\mathcal{N}} \exp \left\{   -
        \frac{1}{2} (\beta-\hat{\beta}_J)^T H_{J}(\hat{\beta}_J)
        (\beta-\hat{\beta}_J)  \right\} d\beta. 
    \end{split}
  \end{equation}
  In the last integral, change variables to
  $\xi= H_J (\hat{\beta}_J)^{1/2} (\beta-\hat{\beta}_J)$ to see that
  \begin{align}
    \nonumber
    \int_{\mathcal{N}} \exp& \left\{   - \frac{1}{2}
                             (\beta-\hat{\beta}_J)^T H_{J}(\hat{\beta}_J)
                             (\beta-\hat{\beta}_J)  \right\} d\beta
    \\
    \nonumber
                           &=  \left(\det   H_J(\hat{\beta}_J)
                             \right)^{-1/2} \cdot  \int_{ \|\xi\|_2
                             \le \sqrt{5|J| \log (n)}} \exp \left\{   -
                             \frac{1}{2} \|\xi\|^2_2  \right\} d\xi\\
    \nonumber
                           &= \left(\frac{(2\pi)^{|J|}}{\det
                             H_J(\hat{\beta}_J)}    \right)^{1/2}
                             \cdot \Pr\left\{\chi^2_{|J|}\le
                             5|J|\log(n)\right\}\\
    \label{eq:gaussian-integral}
                           &= \left(\frac{(2\pi)^{|J|}}{\det
                             H_J(\hat{\beta}_J)}    \right)^{1/2}
                             \cdot \exp\{\pm1/\sqrt{n}\},
  \end{align}
  where we use a tail bound for the $\chi^2$-distribution stated in
  Lemma~\ref{lemma:chi-square tail bound}.  We now
  substitute~(\ref{eq:gaussian-integral}) into~(\ref{eq:laplace1}), and
  simplify the result using that $e^{-x} \ge 1-2 x$ and $e^x \le 1+2x$
  for all $0 \le x \le 1$.  We find that
  \begin{equation}
    \label{eq:I1-bound}
  \mathcal{I}_1 =L(\hat{\beta}_J) f_J(\hat{\beta}_J)
\left(\frac{(2\pi)^{|J|}}{\det H_{J}
        (\hat{\beta}_J)}\right)^{1/2}  \left( 1 \pm
    2\left(1+\sqrt{\frac{125 c_{\mathrm{change}}^2 }{4
          c_{\mathrm{lower}}^3}} +
      \sqrt{\frac{5F_1^2}{c_{\mathrm{lower}}}} \right) \sqrt{\frac{|J|^3
        \log ^3 (n)}{n}} \right) 
  \end{equation}
  when the constant $c_{\mathrm{sample}}$ is chosen large enough.
  \smallskip

  {\em (ii-b) Approximation of integral $\mathcal{I}_2$.}  Let $\beta$
  be a point on the boundary of $\mathcal{N}$.  It then holds that
  \begin{align*}
    (\beta-\hat{\beta}_J)^T H_J (\hat{\beta}_J) (\beta-\hat{\beta}_J) 
    &=\sqrt{5|J| \log (n)}\cdot \|H_J (\hat{\beta}_J)^{1/2}
      (\beta-\hat{\beta}_J)\|_2  .
  \end{align*}
  We may deduce from 
  (\ref{Eq:mvtapproximation}) that
  \begin{align*}
    \nonumber
      \log L (\beta)  &\le \log L (\hat{\beta}_J) -\frac{\sqrt{5|J| \log (n)}}{2} \|H_J (\hat{\beta}_J)^{1/2}
      (\beta-\hat{\beta}_J)\|_2+
      \sqrt{\frac{|J|^3 \log^3 (n)}{n}}\cdot \sqrt{\frac{125
          c^2_{\mathrm{change}}}{4 c^3_{\mathrm{lower}}}}  \\ 
      \label{asdf}
      &\le \log L (\hat{\beta}_J)  - \|H_J (\hat{\beta}_J)^{1/2} (\beta-\hat{\beta}_J)\|_2 \cdot \sqrt{|J|\log (n)},
  \end{align*}
  for $ |J|^3 \log^3 (n)/n$ sufficiently small, which can be ensured
  by choosing $c_{\mathrm{sample}}$ large enough.  The concavity of
  the log-likelihood function now implies that for all
  $\beta\not\in\mathcal{N}$ we have
  \begin{equation}
    \label{eq:ii-b-like}  
    \log L (\beta) \le \log L(\hat{\beta}_J) - \|H_J
    (\hat{\beta}_J)^{1/2} (\beta-\hat{\beta}_J)\|_2 \cdot \sqrt{|J|\log
      (n)}. 
  \end{equation}
  Moreover, using first assumption~\ref{ass:B2} and then
  assumption~\ref{ass:B1}, we have that
  \begin{equation*}
    \log f_J(\beta) \;\le\; \log f_J ({0}) + F_2 \;\le\; \log
    f_J(\hat{\beta}_J) + F_1 \|\hat{\beta}_J\|_2 + F_2.
  \end{equation*}
  Since $\|\hat{\beta}_J\|_2\le a_{\mathrm{MLE}}$, it thus holds that  
  \begin{equation}
    \label{eq:ii-b-prior}  
      \log f_J(\beta) \;\le\; \log
      f_J(\hat{\beta}_J) + F_1 a_{\mathrm{MLE}}  + F_2.
  \end{equation}

  Combining the bounds from~(\ref{eq:ii-b-like})
  and~(\ref{eq:ii-b-prior}), the integral can be bounded as
  \begin{equation}
    \label{eq:ii-b-bound1}
    \mathcal{I}_2 
    \;\le\; L(\hat{\beta}_J)f_J (\hat{\beta}_J) e^{ F_1
      a_{\mathrm{MLE}} + F_2} \cdot \int_{\mathbb{R}^J\setminus
      \mathcal{N}} \exp \left\{ -\|H_J(\hat{\beta}_J)^{1/2}
      (\beta-\hat{\beta}_J)  \|_2  \cdot \sqrt{|J|\log (n)} \right\}
    d\beta. 
  \end{equation}
  Changing variables to
  $\xi = H_J (\hat{\beta}_J)^{1/2} (\beta-\hat{\beta}_J)$ and
  applying Lemma~\ref{lemma:integral bound}, we may bound the integral
  in~(\ref{eq:ii-b-bound1}) as
  \begin{align*}
    &\int_{\mathbb{R}^J\setminus
      \mathcal{N}} \exp \left\{ -\|H_J(\hat{\beta}_J)^{1/2}
      (\beta-\hat{\beta}_J)  \|_2  \cdot \sqrt{|J|\log (n)} \right\}
      d\beta \\
    &\le \left(\det H_J(\hat{\beta}_J)\right)^{-1/2}  \cdot
      \int_{\|\xi\|_2 > \sqrt{5|J|\log(n)}} \exp \left\{
      -\sqrt{|J|\log (n)} \cdot \|\xi\|_2 \right\} d\xi \\ 
    &\le \left(\det H_J(\hat{\beta}_J)\right)^{-1/2}  \cdot  
      \frac{4(\pi)^{|J|/2}}{\Gamma\left( \frac{1}{2} |J|\right)}
      \frac{\sqrt{5|J|\log(n)}^{|J|-1}}{\sqrt{|J|\log (n)}} e^{-\sqrt{5} \,
      |J|\log(n)}\\
    &= \left(\frac{(2\pi)^{|J|}}{\det
      H_J(\hat{\beta}_J)}    \right)^{1/2}
      \cdot
      \frac{2\sqrt{5}}{\Gamma\left( \frac{1}{2} |J|\right)}
      \left( \frac{5}{2}|J|\log(n)\right)^{|J|/2-1}\cdot \frac{1}{n^{\sqrt{5} \,
      |J|}}.
  \end{align*}
  Stirling's lower bound on the Gamma function gives
  \begin{align*}
      \frac{(|J|/2)^{|J|/2-1}}{\Gamma\left( \frac{1}{2} |J|\right)}
    \;=\;
      \frac{(|J|/2)^{|J|/2}}{\Gamma\left( \frac{1}{2} |J|+1\right)}
    \;\le\;
    \frac{1}{\sqrt{|J|\pi}}e^{|J|/2}.
  \end{align*}
  Using this inequality, and returning to (\ref{eq:ii-b-bound1}), we
  see that
  \begin{multline}
    \label{eq:I2-integral-bound}
    \mathcal{I}_2 \;\le\; L(\hat{\beta}_J)f_J (\hat{\beta}_J)
    \left(\frac{(2\pi)^{|J|}}{\det
        H_J(\hat{\beta}_J)}    \right)^{1/2}\\
    \times e^{ F_1 a_{\mathrm{MLE}} + F_2} \cdot
    \frac{2e\sqrt{5}}{\sqrt{|J|\pi} } \cdot \left(\frac{5e\log(n)}{n}
    \right)^{|J|/2-1} \frac{1}{n^{(\sqrt{5}-1/2)\cdot |J|+1}}.
  \end{multline}
  Based on this fact, we certainly have the very loose bound that 
  \begin{equation}
    \label{eq:I2-bound}
  \mathcal{I}_2 \;\le\; L(\hat{\beta}_J) f_J (\hat{\beta}_J)
  \left(\frac{(2\pi)^{|J|}}{\det H_J(\hat{\beta}_J)} \right)^{1/2}
   e^{F_1a_{\mathrm{MLE}}+F_2}\cdot \frac{1}{\sqrt{n}},
  \end{equation}
  for all sufficiently large $n$.
  \smallskip

  {\em (ii-c) Combining the bounds.}  From~(\ref{eq:I1-bound})
  and~(\ref{eq:I2-bound}), we obtain that
  \begin{multline}
    \label{eq:laplace-final}
    \text{Evidence}(J) = \mathcal{I}_1+\mathcal{I}_2 
    =L(\hat{\beta}_J) f_J(\hat{\beta}_J) 
      \left(\frac{(2\pi)^{|J|}}{\det H_J(\hat{\beta}_J)}
      \right)^{1/2}\times 
    \\
                       \qquad  \left( 1 \pm
                          \left(e^{F_1a_{\mathrm{MLE}}+F_2}+2+\sqrt{\frac{125
                          c_{\mathrm{change}}^2 }{ c_{\mathrm{lower}}^3}} +
                          \sqrt{\frac{20F_1^2}{c_{\mathrm{lower}}}}
                          \right) \sqrt{\frac{|J|^3 
                          \log ^3 (n)}{n}} \right)
  \end{multline}
  for sufficiently large $n$, as desired.
\end{proof}

\begin{remark}
  The proof of Theorem~\ref{thm:LaplaceApproximation} could be
  modified to handle other situations of interest.  For instance,
  instead of a fixed Lipschitz constant $F_1$ for all log prior
  densities, one could consider the case where $\log f_J$ is Lipschitz
  with respect to a constant $F_1(J)$ that grows with the cardinality
  of $|J|$, e.g., at a rate of $\sqrt{|J|}$ in which case the rate of
  square root of $|J|^3\log^3(n)/n$ could be modified to square root
  of $|J|^4\log^3(n)/n$.  The term $e^{F_1(J) a_{\mathrm{MLE}}}$ that
  would appear in~(\ref{eq:laplace-final}) could be compensated
  using~(\ref{eq:I2-integral-bound}) in less crude of a way than when
  moving to~(\ref{eq:I2-bound}).
\end{remark}

\section{Numerical experiment for sparse Bayesian logistic regression}
\label{subsec:experiment}

In this section, we perform a simulation study to assess the approximation error in Laplace approximations to the marginal likelihood of logistic regression models.   To this end, we generate independent covariate vectors $X_1,\dots,X_n$ with i.i.d.~$N(0,1)$ entries.  For each choice of a (small) value of $q$, we take the true parameter vector $\beta_0\in\mathbb{R}^p$ to have the first $q$ entries equal to two and the rest of the entries equal zero.  So, $J_0=\{1,\dots,q\}$.  We then generate independent binary responses $Y_1,\dots,Y_n$, with values in $\{0,1\}$ and distributed as $(Y_i |X_i) \sim \mathrm{Bernoulli}(p_i (X_i) ) $, where
\[
p_i (x) = \left( \frac{\exp \left( x^T\beta_0 \right) }{1+\exp \left( x^T\beta_0 \right)}\right) \iff \log \left( \frac{p_i (x)}{1-p_i (x)} \right) = x \cdot  \beta_0,
\]
based on the usual (and canonical) logit link function.  

We record that the logistic regression model with covariates indexed by $J\subset [p]$ has 
the likelihood function
\begin{equation}
\label{Eq:likelihood}
L(\beta) =  \exp \left\{ \sum_{i=1}^n Y_i \cdot X_{iJ}^T\beta -  \log \left(1+\exp (X_{iJ}^T \beta) \right)\right\}, \quad \beta\in\mathbb{R}^J,
\end{equation}
where, as previously defined, $X_{iJ} = (X_{ij})_{j\in J}$ denotes the subset of covariates for model $J$.   The negative Hessian of the log-likelihood function is
\[
H_J (\beta) = \sum_{i=1}^n X_{iJ} X_{iJ}^T \cdot \frac{\exp (X_{iJ}^T \beta) }{\left(1+\exp (X_{iJ}^T \beta)\right)^2}.
\]
For Bayesian inference in the logistic regression model given by $J$, we consider as a prior distribution a standard normal distribution on $\mathbb{R}^J$, that is, the distribution of a random vector with $|J|$ independent $N(0,1)$ coordinates.  As in previous section, we denote the resulting prior density by $f_J$. We then wish to approximate the evidence or marginal likelihood
\[
 \text{Evidence}(J) \;:=\; \int_{\mathbb{R}^J} L(\beta) f_{J}(\beta) \;d\beta.
\]

As a first approximation, we use a Monte Carlo approach in which we simply draw independent samples $\beta^1,\dots,\beta^B$ from the prior $f_J$ and estimate the evidence as
\[
\mathrm{MonteCarlo}(J) =\frac{1}{B} \sum_{b=1}^B  L (\beta^b),
\]
where we use $B=50,000$ in all of our simulations.  As a second method, we compute the Laplace approximation
\[
\mathrm{Laplace}(J) := L(\hat{\beta}_J) f_J(\hat{\beta}_J) \left(\frac{(2\pi)^{|J|}}{\det H_J(\hat{\beta}_J)}
      \right)^{1/2},
\]
where $\hat{\beta}_J$ is the maximum likelihood estimator in model $J$.
For each choice of the number of covariates $p$, the model size $q$, and the sample size $n$, we calculate the Laplace approximation error as
\[
\max_{J\subset[p],\, |J|\le q} \;  \left|\, \log  \mathrm{MonteCarlo}(J) - \log  \mathrm{Laplace}(J)\,\right|.
\]

We consider $n\in \{50,60,70,80,90,100\}$ in our experiment.  Since we wish to compute the Laplace approximation error of every $q$-sparse model, and the number of possible models is on the order of $p^q$, we consider $p=n/2$ and $q\in \{1,2,3\}$.  The Laplace approximation error, averaged across 100 independent simulations, is shown in Figure~\ref{Figure:simresult}.  We remark that the error in the Monte Carlo approximation to the marginal likelihood is negligible compared to the quantity plotted in Figure~\ref{Figure:simresult}.  With two independent runs of our Monte Carlo integration routine, we found the Monte Carlo error to be on the order of 0.05.               

For each $q=1,2,3$, Figure~\ref{Figure:simresult} shows a decrease in Laplace approximation error as $n$ increases.  We emphasize that $p$ and thus also the number of considered $q$-sparse models increase with $n$.    As we increase the number of active covariates $q$, the Laplace approximation error increases.  These facts are in agreement with Theorem~\ref{thm:LaplaceApproximation}.   This said, the scope of this simulation experiment is clearly limited by the fact that only small values of $q$ and moderate values of $p$ and $n$ are computationally feasible.

\begin{figure}[t]
\includegraphics[scale=0.5]{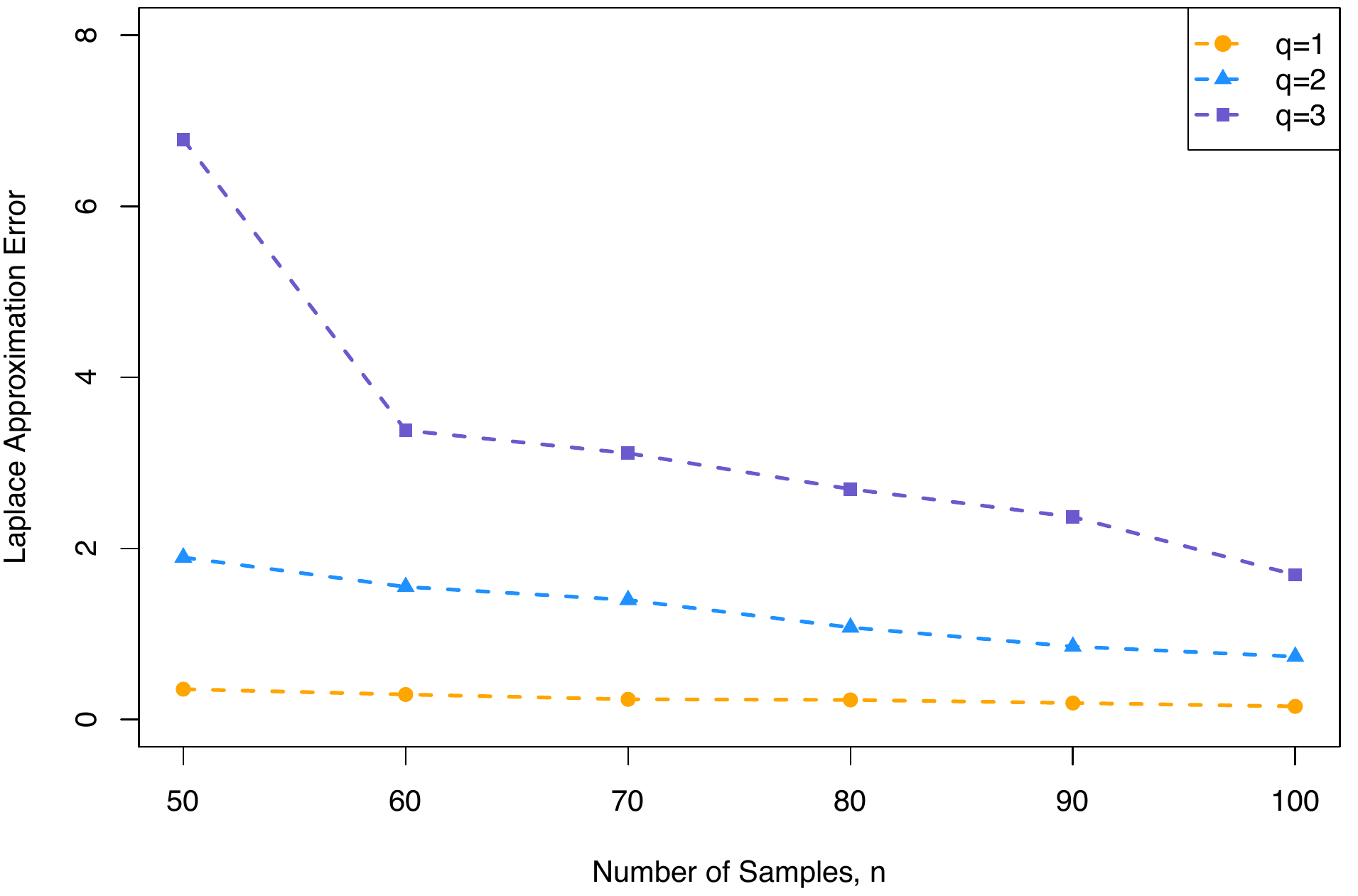}
\caption{\label{Figure:simresult} Maximum Laplace approximation error, averaged over 100 data sets, as a function of the sample size $n$.  The number of covariates is $n/2$, and the number of active covariates is bounded by $q\in \{1,2,3\}$.  }
\end{figure}


\section{Consistency of Bayesian variable selection}
\label{sec:consistency}

In this section, we apply the result on uniform accuracy of the Laplace approximation (Theorem~\ref{thm:LaplaceApproximation}) to prove a high-dimensional consistency result for Bayesian variable selection.  Here, consistency refers to the property that the probability of choosing the most parsimonious true model tends to one in a large-sample limit.   As discussed in the Introduction, we consider priors of the form 
\begin{equation}
\label{Eq:prior}
P_\gamma(J) \propto {p\choose |J|}^{-\gamma} \cdot \mathbbm{1}\{|J| \le q \}, \qquad J\subset[p], 
\end{equation}
where $\gamma \ge 0$ is a parameter that allows one to interpolate between the case of a uniform distribution on models ($\gamma=0$) and a prior for which the model cardinality $|J|$ is uniformly distributed ($\gamma=1$).  

Bayesian variable selection is based on maximizing the (unnormalized) posterior probability
\begin{equation}
 \text{Bayes}_{\gamma}(J) := {p\choose |J|}^{-\gamma} \text{Evidence}{(J)}
\end{equation}	
over $J\subset[p]$, $|J|\le q$.  Approximate Bayesian variable section can be based on maximizing instead the quantity
\begin{equation}
 \text{Laplace}_{\gamma}(J) := {p\choose |J|}^{-\gamma} \text{Laplace}{(J)}.
\end{equation}
We will identify asymptotic scenarios under which maximization of $\text{Laplace}_{\gamma}$ yields consistent variable selection.  Using Theorem~\ref{thm:LaplaceApproximation},  we obtain as a corollary that fully Bayesian variable selection, i.e., maximization of $\text{Bayes}_{\gamma}$, is consistent as well.

To study consistency, we consider a sequence of variable selection problems indexed by the sample size $n$, where the $n$-th problem has $p_n$ covariates, true parameter $\beta_0 (n)$ with support $J_0(n)$, and signal strength
\[
\beta_{\min} (n) = \underset{j\in J_0(n)}{\min} |(\beta_0(n))_j|.
\]
In addition, let $q_n$ be the upper bound on the size of the considered models.  The following consistency result is similar to the related results for extensions of the Bayesian information criterion \citep[see, for instance,][]{chen2012extended,Rina2014Ising}. 

\begin{theorem}
\label{thm:consistency}
Suppose that $p_n=n^\kappa$ for $\kappa >0$, that $q_n = n^\psi$ for
$0\le \psi < 1/3$, that $\beta_{\min} (n) = n^{-\phi/2}$ for $0 \le
\phi < 1-\psi$, and that $\kappa > \psi$.  Assume that \ref{ass:A2}
holds for a fixed constant $a_0$ and that there a fixed functions $c_{\mathrm{lower}}$ and $c_{\mathrm{upper}}$ with respect to which the covariates satisfy the Hessian
conditions~\ref{ass:A3} and~\ref{ass:A4} for all $J \supseteq J_0(n)$
with $|J| \le 2 q_n$.  Moreover, assume that for the considered family of prior densities $\{ f_J (\cdot) : J \subset [p_n], |J| \le q_n\}$ there are constants $ F_3, F_4 \in(0,\infty)$ such
that, uniformly for all $|J| \le q_n$, we have
\[
\sup_{\beta} f_J(\beta) \le F_3 < \infty,  \qquad
\inf_{\|\beta\|_2 \le a_{\mathrm{MLE}}} f_J (\beta) \ge F_4 > 0,
\]
where $a_{\mathrm{MLE}}$ is the constant from Theorem~\ref{thm:LaplaceApproximation}(i).
 Then, for any $\gamma > 1- \frac{1-2\psi}{2(\kappa-\psi)}$, model selection with $\mathrm{Laplace}_{\gamma}$ is consistent in the sense that the event 
\[
J_0(n) = \arg \max \{ \mathrm{Laplace}_{\gamma}(J) : J\subset [p_n], |J| \le q_n \}
\]
has probability tending to one as $n\rightarrow \infty$.
\end{theorem}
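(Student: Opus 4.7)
The plan is to reduce consistency to comparing an extended-BIC--style criterion model by model, then split into overfitted true models and false models. I would first condition on the high-probability event from Theorem~\ref{thm:LaplaceApproximation}, on which $\|\hat\beta_J\|_2\le a_{\mathrm{MLE}}$ uniformly for $|J|\le q_n$; the assumed prior bounds then give $\log f_J(\hat\beta_J)=O(1)$, and \ref{ass:A3} yields $\log\det H_J(\hat\beta_J)=|J|\log n+O(|J|)$ uniformly. Hence
\[
\log\mathrm{Laplace}_\gamma(J) = \log L(\hat\beta_J) - \tfrac{|J|}{2}\log n - \gamma\log\binom{p_n}{|J|} + O(|J|)
\]
uniformly over $J\subset[p_n]$ with $|J|\le q_n$. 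It suffices to show that, with probability tending to one, this quantity is strictly larger at $J_0$ than at any other such $J$.

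For the overfitted case $J\supsetneq J_0$ with $k:=|J|-|J_0|>0$, I would apply a $\chi^2$-tail bound combined with a union bound over the at most $p_n^k = n^{k\kappa}$ possible extensions (analogous to the score/MLE bounds used in the proof of Theorem~\ref{thm:LaplaceApproximation} and in Sect.~B.2 of \cite{Rina2014Ising}) to obtain $\log L(\hat\beta_J) - \log L(\hat\beta_{J_0}) \le k\kappa\log n + o(k\log n)$ uniformly. The penalty difference satisfies
\[
\tfrac{k}{2}\log n + \gamma\left[\log\binom{p_n}{|J|}-\log\binom{p_n}{|J_0|}\right] \ge \left[\tfrac{1}{2}+\gamma(\kappa-\psi)\right]k\log n - O(k),
\]
using the elementary bound $\log\left[\binom{p_n}{|J|}/\binom{p_n}{|J_0|}\right] \ge k\log(p_n/q_n) = k(\kappa-\psi)\log n$. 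The assumption $\gamma > 1-(1-2\psi)/[2(\kappa-\psi)]$ rearranges to $\tfrac{1}{2}+\gamma(\kappa-\psi) > \kappa$, so the penalty strictly dominates the likelihood gain for every $k\ge 1$.

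For the false-model case $J\not\supseteq J_0$, I would set $\bar J = J\cup J_0$ (so $|\bar J|\le 2q_n$) and use standard MLE concentration on the true model $\bar J$ to get $\|\hat\beta_{\bar J}-\beta_0\|_2 = O(\sqrt{q_n\log p_n/n})$ uniformly, which equals $o(\beta_{\min}(n))$ precisely because $\phi < 1-\psi$. Regarding $\hat\beta_J$ as a vector in $\R^{\bar J}$ with zeros on $J_0\setminus J$, this gives $\|\hat\beta_J - \hat\beta_{\bar J}\|_2 \ge \|(\beta_0)_{J_0\setminus J}\|_2 - o(\beta_{\min}(n)) \ge \tfrac{1}{2}\beta_{\min}(n)$ for large $n$. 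Taylor expansion around $\hat\beta_{\bar J}$ together with the Hessian lower bound from \ref{ass:A3} then yields
\[
\log L(\hat\beta_J) \le \log L(\hat\beta_{\bar J}) - \tfrac{1}{8}\,c_{\mathrm{lower}}\,n\,\beta_{\min}(n)^2.
\]
Combining with the overfitting bound applied to $\bar J$, $\log L(\hat\beta_{\bar J})-\log L(\hat\beta_{J_0}) = O(q_n\log n)$, the signal-loss term $\asymp n^{1-\phi}$ dominates both this residual and the penalty difference, whose absolute value is $O(q_n\log n)$, because $1-\phi>\psi$.

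The main obstacle is producing the uniform log-likelihood bound of the overfitting step with a sharp enough leading constant so that the threshold on $\gamma$ emerges exactly as stated; the supporting tools ($\chi^2$ concentration, Hessian-based quadratic approximations, and union bounds over $\binom{p_n}{k}$ submodels) are already in place from the proof of Theorem~\ref{thm:LaplaceApproximation} and Sect.~B.2 of \cite{Rina2014Ising}, so the remaining task is careful bookkeeping in the overfitting and false-model comparisons.
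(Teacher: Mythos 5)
Your overall strategy---condition on the uniform MLE bound, expand $\log\mathrm{Laplace}_\gamma$ term by term, and treat overfitted true models and false models separately---is the same as the paper's, and both your false-model argument and your reduction of the $\gamma$-threshold to $\tfrac12+\gamma(\kappa-\psi)>\kappa$ are correct. There is, however, a genuine gap in the overfitted case, at the step where you write $\log\det H_J(\hat\beta_J)=|J|\log n+O(|J|)$ and then assert that the penalty difference between $J$ and $J_0$ is $\bigl[\tfrac12+\gamma(\kappa-\psi)\bigr]k\log n-O(k)$. The two $O(|J|)$ remainders do not telescope to $O(k)$: from \ref{ass:A3} alone you only know $\log\det H_J(\hat\beta_J)\in\bigl[|J|\log(nc_{\mathrm{lower}}),\,|J|\log(nc_{\mathrm{upper}})\bigr]$, so the difference $\log\det H_J(\hat\beta_J)-\log\det H_{J_0}(\hat\beta_{J_0})$ is only bounded below by $k\log n-|J_0|\log(c_{\mathrm{upper}}/c_{\mathrm{lower}})-O(k)$. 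Since $|J_0|$ may be as large as $q_n=n^\psi$, this unaccounted $O(q_n)$ term swamps the $\Theta(k\log n)$ margin whenever $\psi>0$ and $k$ is small (e.g.\ $k=1$), and the comparison fails.

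The fix, which is what the paper does, is to compare the two Hessians at the common point $\beta_0$, where $H_{J_0}(\beta_0)$ is a principal submatrix of $H_J(\beta_0)$: the ratio $\det H_J(\beta_0)/\det H_{J_0}(\beta_0)$ equals the determinant of a $k\times k$ Schur complement whose eigenvalues again lie in $[nc_{\mathrm{lower}},nc_{\mathrm{upper}}]$, giving a lower bound of $k\log(nc_{\mathrm{lower}})$ with error genuinely $O(k)$. One then transfers from $\beta_0$ to $\hat\beta_J$ and $\hat\beta_{J_0}$ using \ref{ass:A4} together with $\|\hat\beta_J-\beta_0\|_2=O(\sqrt{q_n\log p_n/n})$, which costs only an additive $o(1)$ because $q_n^3\log p_n=o(n)$ under the stated scaling. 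With this repaired, the rest of your outline goes through: the uniform overfitting bound you flag as the ``main obstacle'' is exactly Theorem~2.2 of \cite{Rina2014Ising}, which gives $\log L(\hat\beta_J)-\log L(\hat\beta_{J_0})\le(1+\epsilon)(k+\nu)\log p_n$ with $\epsilon$ and $\nu$ as small as desired, and the strict inequality in the hypothesis on $\gamma$ leaves exactly the room needed to absorb these constants.
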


Together with Theorem~\ref{thm:LaplaceApproximation}, the proof of
Theorem~\ref{thm:consistency}, which we give below, also shows
consistency of the fully Bayesian procedure.

\begin{corollary}
  \label{cor:bayes-consistency}
  Under the assumptions of Theorem~\ref{thm:consistency},
  fully Bayesian model selection is consistent, that is, the event
  \[
J_0(n) = \arg \max \{
  \mathrm{Bayes}_{\gamma}(J) : J\subset [p_n], |J| \le q_n \}
  \]
  has probability tending to one as $n\rightarrow \infty$.
\end{corollary}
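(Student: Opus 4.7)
The plan is to deduce the corollary directly from Theorems~\ref{thm:LaplaceApproximation} and~\ref{thm:consistency}, using the former to show that $\mathrm{Bayes}_\gamma$ and $\mathrm{Laplace}_\gamma$ induce the same ranking of sparse models on a high-probability event, and the latter to identify that ranking's maximizer.  The first step is to invoke Theorem~\ref{thm:LaplaceApproximation} on the prior family $\{f_J\}$.  The two-sided bound $F_4 \le f_J(\beta) \le F_3$ on the ball $\{\|\beta\|_2 \le a_{\mathrm{MLE}}\}$ immediately gives the log-density-ratio condition~\ref{ass:B2} with $F_2 = \log(F_3/F_4)$, and I retain the mild Lipschitz condition~\ref{ass:B1} as a regularity requirement on the prior family.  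Then, on the event of probability at least $1-p_n^{-\nu}$ supplied by Theorem~\ref{thm:LaplaceApproximation}, we have uniformly over all $J\subset[p_n]$ with $|J|\le q_n$
\[
\mathrm{Bayes}_\gamma(J) \;=\; \mathrm{Laplace}_\gamma(J)\cdot\bigl(1\pm c_{\mathrm{Laplace}}\,\eps_n\bigr),\qquad \eps_n := \sqrt{q_n^3\log^3(n)/n},
\]
since the shared factor ${p_n\choose |J|}^{-\gamma}$ cancels from the multiplicative error.

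The second step is to convert this multiplicative control into a logarithmic one.  The scaling assumption $q_n=n^{\psi}$ with $\psi<1/3$ gives $q_n^3\log^3(n)/n = n^{3\psi-1}\log^3(n) = o(1)$, hence $\eps_n\to 0$.  For $n$ sufficiently large that $c_{\mathrm{Laplace}}\eps_n \le 1/2$, one therefore has
\[
\bigl|\log\mathrm{Bayes}_\gamma(J)-\log\mathrm{Laplace}_\gamma(J)\bigr| \;\le\; 2c_{\mathrm{Laplace}}\,\eps_n \;=\; o(1)
\]
uniformly over $|J|\le q_n$.

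The third step exploits the proof of Theorem~\ref{thm:consistency}, which establishes, with probability tending to one, not merely positivity but divergence of the Laplace margin
\[
\Delta_n \;:=\; \log\mathrm{Laplace}_\gamma(J_0(n)) - \max_{\substack{J\ne J_0(n)\\ |J|\le q_n}} \log\mathrm{Laplace}_\gamma(J).
\]
Under the stated scaling, this margin grows at least polylogarithmically in $n$: the penalty contribution $\gamma\log{p_n\choose|J|}$ dominates for overfitting models $J\supsetneq J_0(n)$, while the signal-driven likelihood gap of order $n\beta_{\min}(n)^2 = n^{1-\phi}$ dominates for underfitting models $J\not\supseteq J_0(n)$.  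Since $\Delta_n\gg \eps_n$, the $o(1)$ discrepancy from step two cannot reverse the ordering.  On the intersection of the Laplace-accuracy event from step one and the Theorem~\ref{thm:consistency} event, the $\arg\max$ of $\mathrm{Bayes}_\gamma$ therefore equals the $\arg\max$ of $\mathrm{Laplace}_\gamma$, which is $J_0(n)$.

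The main obstacle is this third step: extracting a genuine divergence rate---not just positivity---for $\Delta_n$ from the proof of Theorem~\ref{thm:consistency}, so that the $o(1)$ Laplace-approximation discrepancy is safely absorbed.  This is standard for BIC-style consistency proofs, which treat under- and over-fitting models separately at quantitative rates determined by $(\kappa,\psi,\phi,\gamma)$, and the needed quantitative margin should therefore be read off without difficulty from the argument used to prove Theorem~\ref{thm:consistency}.
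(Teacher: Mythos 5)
Your proposal is correct and takes essentially the same route as the paper, which proves the corollary only by remarking that Theorem~\ref{thm:LaplaceApproximation} combined with the proof of Theorem~\ref{thm:consistency} yields it: your three steps are precisely the fleshing-out of that remark, including the key quantitative point that the Laplace margins established in the proof of Theorem~\ref{thm:consistency} (of order $n\beta_{\min}(n)^2$ minus lower-order terms for false models, and of order $\log n$ for overfitting true models) actually diverge rather than being merely positive, so the uniform $o(1)$ multiplicative error from Theorem~\ref{thm:LaplaceApproximation} cannot flip the ordering. Your explicit retention of condition~\ref{ass:B1} is also a fair and arguably necessary reading, since the hypotheses of Theorem~\ref{thm:consistency} do not literally include the Lipschitz condition that Theorem~\ref{thm:LaplaceApproximation}(ii) requires, while the bounds $F_3,F_4$ do supply~\ref{ass:B2} exactly as you note.
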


\begin{proof}[Proof of Theorem~\ref{thm:consistency}]
Our scaling assumptions for $p_n, q_n$ and $\beta_{\min} (n)$ are such
that the conditions imposed in Theorem 2.2 of \cite{Rina2014Ising} are
met for $n$ large enough.  This theorem and
Theorem~\ref{thm:LaplaceApproximation}(i) in this paper then yield that there are constants $\nu,\epsilon,C_\mathrm{false},a_{\mathrm{MLE}}>0$ such that with probability at least $1-p_n^\nu$ the following three statements hold simultaneously:
\begin{enumerate}[label=(\alph*)]
  \item \label{thm2:a} For all $|J|\leq q_n$ with
    $J\supseteq J_0(n)$,
    \begin{equation}
      \label{eq:consist-fact-a}
    \ln(\hat{\beta}_J)-\ln(\hat{\beta}_{J_0(n)})\;\leq\;
    (1+\eps)(|J\backslash J_0(n)|+\nu)\log(p_n)\;.
    \end{equation}
  \item \label{thm2:b} For all $|J|\leq q_n$ with
    $J\not\supset J_0(n)$,
    \begin{equation}
      \label{eq:consist-fact-b}
    \ln(\hat{\beta}_{J_0}(n))-\ln(\hat{\beta}_J)\;\geq\;
    C_\mathrm{false} \,n\, \beta_{\min} (n)^2\;.
    \end{equation}
 \item For all $|J|\le q_n$ and some constant $a_{\mathrm{MLE}}>0$,
   \begin{equation}
     \label{eq:consist-fact-c}
     \|\hat{\beta}_J\| \le a_{\mathrm{MLE}}.
   \end{equation}
  \end{enumerate}
In the remainder of this proof we show that these three facts, in combination with further technical results from \cite{Rina2014Ising}, imply that 
\begin{equation}
  \label{eq:consist-claim}
  J_0(n) = \arg \max \{ \mathrm{Laplace}_{\gamma}(J) : J\subset [p_n], |J| \le q_n \}.
\end{equation}
For simpler notation, we no longer indicate explicitly that $p_n$,
$q_n$, $\beta_0$ and derived quantities vary with $n$.  We will then
show that 
\begin{multline}
\label{Eq:posteriorthm2}
\log \frac{\text{Laplace}_{\gamma}(J_0)}{\text{Laplace}_{\gamma}(J)}  = \left( \log P(J_0)-\log P(J)\right) +\left( \log L (\hat{\beta}_{J_0}) - \log L (\hat{\beta}_{J}) \right)- |J \backslash J_0| \log \sqrt{2\pi}\\
+  \left(  \log f_{J_0} (\hat{\beta}_{J_0})- \log f_J
  (\hat{\beta}_J)\right) + \frac{1}{2} \left( \log \det H_J (\hat{\beta}_J) -\log \det H_{J_0} (\hat{\beta}_{J_0}) \right)
\end{multline}
is positive for any model given by a set $J \ne J_0$ of cardinality
$|J| \le q$.  We let
\[
c_{\mathrm{lower}} := c_{\mathrm{lower}}(a_{\mathrm{MLE}}), \qquad
c_{\mathrm{upper}} := c_{\mathrm{upper}}(a_{\mathrm{MLE}}).
\]

\smallskip
{\em False models.}  If $J \not \supseteq J_0$, that is, if the model is false, we observe that 
\[
\log P(J_0)-\log P(J) = -\gamma \log {p \choose |J_0|}+ \gamma \log {p \choose |J|}\ge -\gamma \log {p\choose |J_0|} \ge  -\gamma q\log p.
\]
Moreover, by \ref{ass:A3}
and~(\ref{eq:consist-fact-c}),
\begin{align*}
\log \det H_J (\hat{\beta}_J) -\log \det H_{J_0} (\hat{\beta}_{J_0})
&\ge |J| \cdot\log(nc_{\mathrm{lower}}) -
|J_0| \cdot\log(nc_{\mathrm{upper}}) \\
&\ge - q \log \left(n
  \frac{c_{\mathrm{upper}}}{\min\{c_{\mathrm{lower}},1\}} \right).
\end{align*}
Combining the lower bounds with~(\ref{eq:consist-fact-b}), we obtain that
\begin{align*}
  \log \frac{\text{Laplace}_{\gamma}(J_0)}{\text{Laplace}_{\gamma}(J)}
  &\ge  C_{\mathrm{false}} n \beta_{\min}^2 -|J\backslash J_0|
    \log (\sqrt{2\pi}) - q \log \left(p^{\gamma} n\frac{c_{\mathrm{upper}}}{\min\{c_{\mathrm{lower}},1\}}\right) + \log \left(\frac{F_4}{F_3}\right)\\
  &\ge  C_{\mathrm{false}} n \beta_{\min}^2 -q \log
    \left(\frac{c_{\mathrm{upper}}}{\min\{c_{\mathrm{lower}},1\}} \cdot
    \sqrt{2\pi} n p^{\gamma}\right)+ \log 
    \left(\frac{F_4}{F_3}\right). 
\end{align*}
By our scaling assumptions, the lower bound is positive for
sufficiently large $n$.  

\smallskip {\em True models.}  It remains to resolve the case of
$J \supsetneq J_0$, that is, when model $J$ is true.  We record that
from the proof of Theorem~2.2 in \citet{Rina2014Ising}, it holds on
the considered event of probability at least $1-p^{-\nu}$ that for any
$J\supseteq J_0$,
\begin{equation}
  \label{eq:consist-true-MLE}
\|\hat{\beta}_J -\beta_0\|_2 \le  \frac{4 \sqrt{c_{\mathrm{upper}} }}{\sqrt{n} c_{\mathrm{lower}} } \tau_{|J\backslash J_0|},
\end{equation}
where 
\[
\tau_{r}^2 = \frac{2}{(1-\epsilon')^3} \cdot \left[ (J_0+r) \log
  \left( \frac{3}{\epsilon'} \right) + \log(4p^\nu) + r\log (2p)
\right]. 
\] 
Under our scaling assumptions on $p$ and $q$, it follows that
$\|\hat{\beta}_J -\beta_0\|_2$ tends to zero as $n\to\infty$.

We begin again by considering the prior on models, for which we have that
\begin{equation*}
\begin{split}
\log P(J_0) - \log P(J) = \gamma \log \frac{|J_0|! (p-|J_0|)!}{|J|! (p-|J|)!}
&\;\ge\;   -\gamma |J\backslash J_0| \log q + \gamma |J\backslash J_0| \log (p-q)\\
&\;\ge\;   -\gamma |J\backslash J_0| \log q + \gamma |J\backslash J_0| (1-\tilde{\epsilon})  \log p
\end{split}
\end{equation*}
for all $n$ sufficiently large.  Indeed, we assume that
$p = n^\kappa$ and $q=n^\psi$ with $\kappa > \psi$ such that
$p-q \ge p^{1-\tilde{\epsilon}}$ for any small constant
$\tilde{\epsilon}> 0$ as long as $p$ is sufficiently large relative to
$q$.  

Next, if $J \supsetneq J_0$, then~\ref{ass:A3} and
\ref{ass:A4} allow us to relate $H_J(\hat{\beta}_J)$ and
$H_{J_0}(\hat{\beta}_{J_0})$ to the respective Hessian at the true
parameter, i.e., $H_{J} ({\beta}_{0})$ and $H_{J_0} (\beta_0)$.  We
find that
\begin{multline*}
\log\left( \frac{\det H_J (\hat{\beta}_J)}{\det H_{J_0}
    (\hat{\beta}_{J_0})} \right) \;\ge \;
 \log \left( \frac{\det H_J(\beta_0)}{\det H_{J_0} (\beta_0)}\right) +
 |J| \log \left( 1-
   \frac{c_{\mathrm{change}}}{c_{\mathrm{lower}}}\|\hat{\beta}_{J}
 - \beta_0\|_2 \right) \\ 
  - |J_0| \log  \left( 1+ \frac{c_{\mathrm{change}}}{c_{\mathrm{lower}}}\|\hat{\beta}_{J_0} - \beta_0\|_2 \right).
\end{multline*}
Note that by assuming $n$ large enough, we may assume that
$\|\hat{\beta}_J -\beta_0\|_2$ and $\|\hat{\beta}_{J_0} -\beta_0\|_2$
are small enough for the logarithms to be well defined;
recall~(\ref{eq:consist-true-MLE}).  Using that
$x\ge \log (1+x) \; \text{for all} \; x>-1$ and
$\log(1- \frac{x}{2}) \ge -x \; \text{for all} \; 0\le x \le 1$, we
see that 
\begin{multline*}
\log\left( \frac{\det H_J (\hat{\beta}_J)}{\det H_{J_0}
    (\hat{\beta}_{J_0})} \right) \;\ge \;
 \log \left( \frac{\det H_J(\beta_0)}{\det H_{J_0} (\beta_0)}\right)
  -2
  |J|\frac{c_{\mathrm{change}}}{c_{\mathrm{lower}}}\|\hat{\beta}_{J} 
- \beta_0\|_2 \\
  - |J_0| \frac{c_{\mathrm{change}}}{c_{\mathrm{lower}}}\|\hat{\beta}_{J_0} - \beta_0\|_2.
\end{multline*}
Under our scaling assumptions, $q^3\log(p)=o(n)$, and thus
applying~(\ref{eq:consist-true-MLE}) twice shows that
\begin{equation*}
-2 |J|\frac{c_{\mathrm{change}}}{c_{\mathrm{lower}}}\|\hat{\beta}_{J} - \beta_0\|_2   - |J_0| \frac{c_{\mathrm{change}}}{c_{\mathrm{lower}}}\|\hat{\beta}_{J_0} - \beta_0\|_2
\end{equation*}
is larger than any small negative constant for $n$ large enough.
For simplicity, we take the lower bound as $-1$.  By~\ref{ass:A3}, it
holds that
\begin{equation*}
\begin{split}
 \log \left( \frac{\det H_J(\beta_0)}{\det H_{J_0} (\beta_0)}\right)
 &= \log \det \left( H_{J\backslash J_0} (\beta_0)- H_{{J_0,{J\backslash J_0}}}(\beta_0)^{T} H_{J_0} (\beta_0)^{-1} H_{{J_0,{J\backslash J_0}}} (\beta_0)   \right)\\
  &\ge |J\backslash J_0| \log (n) + |J\backslash J_0| \log (c_{\mathrm{lower}}),
\end{split}
\end{equation*}
because the eigenvalues of the Schur complement of $H_J(\beta_0)$ are
bounded the same way as the eigenvalues of $H_J(\beta_0)$; see,
e.g.,~Chapter 2 of \cite{zhang2006schur}. Hence, for sufficiently
large $n$, the following is true for all $J\supsetneq J_0$:
\begin{equation}
  \label{eq:consist-hess-true}
\log \det H_J (\hat{\beta}_J)  -\log \det H_{J_0} (\hat{\beta}_{J_0})
\; \ge\;  |J\backslash J_0| \log (n) + |J\backslash J_0| \log
(c_{\mathrm{lower}} ) -1 .
\end{equation}

Combining the bound for the model prior probabilities
with~(\ref{eq:consist-fact-a}) and~(\ref{eq:consist-hess-true}), we
have for any true model $J\supsetneq J_0$ that
\begin{align*}
\log& \frac{\text{Laplace}_{\gamma}(J_0)}{\text{Laplace}_{\gamma}(J)}\\
&\ge\; -(1+\epsilon) (|J\backslash J_0|+\nu) \log (p) + \gamma
|J\backslash J_0| (1-\tilde{\epsilon}) \log (p)  + \frac{1}{2}
|J\backslash J_0| \log (n)\\
&\qquad - \gamma |J\backslash J_0| \log (q)
 + \frac{1}{2}|J\backslash J_0| \left(\log \frac{c_{\mathrm{lower}}}{{2\pi} }\right)+ \log \left( \frac{F_4}{F_3}\right) -1\\
&\ge\; \frac{1}{2} |J\backslash J_0| \left(\log (n) - \log q^{2\gamma}
  +2\left[(1-\tilde{\epsilon}) \gamma - (1+\epsilon)(1+\nu)
  \right]\log (p)  + \log \left(\frac{c_{\mathrm{lower}}
  }{2\pi}\right)     \right) \\
 &\qquad  + \log \left(\frac{F_4}{F_3} \right) -1.
\end{align*}
This lower bound is is positive for all $n$ large because our
assumption that $p = n^{\kappa}$, $q= n^{\psi}$ for
$0\le \psi< 1/3$, and
\[
\gamma > 1 - \frac{1-2\psi}{2(\kappa-\psi)}
\]
implies that
\begin{equation}
\label{Eq:condition}
\lim_{n\to\infty} \frac{\sqrt{n}}{p^{(1+\epsilon)(1+\nu) - \gamma
    (1-\tilde{\epsilon})} q^{\gamma}} = \infty
\end{equation}
provided the constants $\epsilon$, $\nu$, and $\tilde{\epsilon}$ are
chosen sufficiently small.  
\end{proof}

\section{Discussion}
\label{sec:discussion}

In this paper, we have shown that in the context of high-dimensional variable
selection problems, the Laplace approximation can be accurate
uniformly across a potentially very large number of sparse models.
We then showed how this approximation result allows one to give
results on the consistency of fully Bayesian techniques for variable
selection.

In practice, it is of course infeasible to evaluate the evidence or
Laplace approximation for every single sparse regression model, and some
search strategy must be adopted instead.  Some related numerical experiments
can be found in
\cite{chen2008extended}, \cite{chen2012extended},
\cite{zak2011modified}, and \cite{Rina2014Ising},
although that work considers BIC scores that drop some of the terms
appearing in the Laplace approximation.

Finally, we emphasize that the setup we considered concerns
generalized linear models without dispersion parameter and with
canonical link.  The conditions from \cite{LuoChen2013} could likely
be used to extend our results to other situations.

\bibliography{reference}

\appendix
\section{Technical lemmas}
\label{sec:Appendix B}
This section provides two lemmas that were used in the proof of Theorem~\ref{thm:LaplaceApproximation}.

\begin{lemma}[Chi-square tail bound]
\label{lemma:chi-square tail bound}  
Let $\chi^2_{k}$ denote a chi-square random variable with $k$
degrees of freedom.  Then, for any $n\ge 3$,
\[
\mathbb{P} \left\{ \chi^2_{k} \le 5k\log (n)    \right\} \ge 1-\frac{1}{n^{k}} \ge \exp\{-1/\sqrt{n}\}.
\]
\end{lemma}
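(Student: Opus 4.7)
\medskip
\noindent\textbf{Proof plan for Lemma~\ref{lemma:chi-square tail bound}.}

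The plan is to establish the two inequalities separately, with the first one using a standard Chernoff argument for the chi-square distribution and the second one following from elementary inequalities for the exponential function.

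For the first inequality $\PP{\chi^2_k \le 5k\log(n)} \ge 1 - n^{-k}$, I would apply the Chernoff / MGF bound. Writing $\chi^2_k = \sum_{i=1}^k Z_i^2$ for i.i.d.\ standard normals $Z_i$, the moment-generating function is $\EE{e^{t\chi^2_k}} = (1-2t)^{-k/2}$ for $t<1/2$, which by Markov's inequality gives
\[
\PP{\chi^2_k > x} \;\le\; \inf_{0<t<1/2} e^{-tx}(1-2t)^{-k/2}.
\]
Optimizing in $t$ at fixed $x$ yields $1-2t = k/x$, producing the usual bound
\[
\PP{\chi^2_k > x} \;\le\; \exp\!\left\{-\tfrac{x-k}{2} + \tfrac{k}{2}\log(x/k)\right\}.
\]
Plugging in $x=5k\log n$ reduces the claim to the scalar inequality $1 + \log(5\log n) \le 3\log n$, which I would verify holds for all $n\ge 3$ (the tightest case is $n=3$, where it holds with slack). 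Alternatively one can cite the Laurent–Massart bound $\PP{\chi^2_k \ge k + 2\sqrt{ks}+2s} \le e^{-s}$ with $s=k\log n$ and check $1+2\sqrt{\log n}\le 3\log n$ for $n\ge 3$.

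For the second inequality $1 - n^{-k} \ge \exp\{-1/\sqrt n\}$ for $n \ge 3$ and $k\ge 1$, I would use the elementary estimate $e^{-x} \le 1 - x + x^2/2$ valid for $x\ge 0$. Applied at $x = 1/\sqrt n$, this gives $e^{-1/\sqrt n} \le 1 - 1/\sqrt n + 1/(2n)$, so it suffices to verify $1/\sqrt n - 1/(2n) \ge n^{-k}$, i.e., $(2\sqrt n - 1)/(2n) \ge n^{-k}$. For $k\ge 1$ and $n\ge 3$ the right-hand side is at most $1/n$, while the left-hand side exceeds $1/n$ precisely when $2\sqrt n - 1 \ge 2$, which holds for $n\ge 3$.

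No genuine obstacle arises; the only care needed is to confirm that the constants $5$ and $\sqrt{n}$ are slack enough for the argument to go through uniformly in $k\ge 1$ at the boundary case $n=3$, and both scalar verifications above are straightforward.
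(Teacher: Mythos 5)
Your proposal is correct and takes essentially the same route as the paper: the paper proves the first inequality by citing the Laurent--Massart bound with $s=k\log(n)$ (your stated alternative) and using $\log(n)\ge 1$ to absorb the lower-order terms into $5k\log(n)$, and it proves the second inequality by the same second-order Taylor estimate $e^{-1/\sqrt{n}}\le 1-1/\sqrt{n}+1/(2n)\le 1-1/n\le 1-n^{-k}$ that you use. Your primary Chernoff computation is just the standard derivation underlying that tail bound, so there is no substantive difference.
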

\begin{proof}
  Since $\log(n)\ge 1$ when $n\ge 3$, we have that 
  \[
  k + 2\sqrt{k\cdot k\log (n)} + 2k \log (n) \;\le\;  5 k \log (n).
  \]
  Using the chi-square tail bound in \citet{laurent2000adaptive}, it
  thus holds that
  \begin{align*}
      \mathbb{P} \left\{\chi^2_{k} \le 5k\log (n) \right\} &\ge \mathbb{P} \left\{ \chi^2_{k} \le k + 2\sqrt{k\cdot k\log (n)} + 2k \log (n)  \right\} \\
      &\ge 1-e^{-k \log (n)}.
  \end{align*} 
  Finally, for the last step, by the Taylor series for $x\mapsto e^x$, for all $n\geq 3$ we have
  \[\exp\{-1/\sqrt{n}\}\leq 1 - \frac{1}{\sqrt{n}}+\frac{1}{2}\cdot\frac{1}{n} \leq 1 - \frac{1}{n}\;.\]
\end{proof}

\begin{lemma}
\label{lemma:integral bound}
Let $k\ge 1$ be any integer, and let $a,b >0$ be such that $ab\ge
2(k-1)$.  Then
\[
\int_{\| \xi\|_2 > a } \exp \{ -b\|\xi\|_2 \} d\xi \;\le\;
\frac{4(\pi)^{k/2}}{\Gamma\left( \frac{1}{2} k\right)}
\frac{a^{k-1}}{b} e^{-ab},
\]
where the integral is taken over $\xi \in \mathbb{R}^k$.
\end{lemma}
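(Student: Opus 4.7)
The plan is to reduce the $k$-dimensional integral to a one-dimensional tail integral by passing to spherical coordinates, and then to control that one-dimensional integral by a single integration-by-parts step combined with the hypothesis $ab \ge 2(k-1)$.

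First, since the integrand depends only on $\|\xi\|_2$, I would parametrize by the radius $r = \|\xi\|_2$. Using that the surface area of the sphere of radius $r$ in $\mathbb{R}^k$ equals $\frac{2\pi^{k/2}}{\Gamma(k/2)}\,r^{k-1}$, the integral becomes
\[
\int_{\|\xi\|_2 > a}\!e^{-b\|\xi\|_2}\,d\xi \;=\; \frac{2\pi^{k/2}}{\Gamma(k/2)}\int_a^{\infty} r^{k-1} e^{-br}\,dr.
\]
Thus it suffices to show $I_k(a,b) := \int_a^{\infty} r^{k-1} e^{-br}\,dr \le \frac{2 a^{k-1}}{b} e^{-ab}$ whenever $ab \ge 2(k-1)$.

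Next, I would integrate by parts once, differentiating $r^{k-1}$ and antidifferentiating $e^{-br}$:
\[
I_k(a,b) \;=\; \frac{a^{k-1}}{b}\,e^{-ab} \;+\; \frac{k-1}{b}\int_a^{\infty} r^{k-2} e^{-br}\,dr.
\]
For $k \ge 2$, since $r \ge a$ on the domain of integration, $r^{k-2} \le r^{k-1}/a$, which gives the self-bounding inequality
\[
I_k(a,b) \;\le\; \frac{a^{k-1}}{b}\,e^{-ab} \;+\; \frac{k-1}{ab}\,I_k(a,b).
\]
Under the assumption $ab \ge 2(k-1)$, the coefficient $(k-1)/(ab)$ is at most $1/2$, so rearranging yields $I_k(a,b) \le 2\,\frac{a^{k-1}}{b}\,e^{-ab}$. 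The case $k=1$ is handled directly by $\int_a^{\infty} e^{-br}\,dr = \frac{1}{b}e^{-ab}$, which is already below the required bound.

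Combining the spherical-coordinate identity with the bound on $I_k(a,b)$ produces exactly the claimed inequality. The only nontrivial step is the self-bounding integration by parts; everything else is bookkeeping, and the hypothesis $ab \ge 2(k-1)$ is used precisely to keep the inductive coefficient below $1/2$.
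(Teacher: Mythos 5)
Your proof is correct, and its overall structure matches the paper's: both reduce the integral to a radial one via spherical/polar coordinates and then bound the resulting incomplete-Gamma-type tail under the hypothesis $ab\ge 2(k-1)$. The one genuine difference is in how that tail is controlled. The paper substitutes $r\mapsto br$ to recognize the upper incomplete Gamma function $\Gamma(k,ab)=\int_{ab}^\infty r^{k-1}e^{-r}\,dr$ and then cites inequality (3.2) of Natalini and Palumbo (2000) for the bound $\Gamma(k,ab)\le 2e^{-ab}(ab)^{k-1}$, whereas you prove the equivalent estimate from scratch: one integration by parts, the pointwise bound $r^{k-2}\le r^{k-1}/a$ on $\{r\ge a\}$, and the observation that $ab\ge 2(k-1)$ makes the resulting self-referential coefficient at most $1/2$, so the inequality can be rearranged (which is legitimate since $I_k(a,b)$ is finite). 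Your route buys a self-contained, elementary proof that also makes transparent exactly where the hypothesis $ab\ge 2(k-1)$ enters; the paper's route is shorter on the page at the cost of an external reference. Your handling of $k=1$ and the surface-area constant $2\pi^{k/2}/\Gamma(k/2)$ are both correct, so the two arguments yield the identical final bound.
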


\begin{proof}
  We claim that the integral of interest is 
  \begin{align}
    \label{eq:polar}
    \int_{\| \xi\|_2 > a } \exp \{ -b\|\xi\|_2  \} d\xi 
    &= \frac{2(\pi)^{k/2}}{b^k \Gamma\left( \frac{1}{2}
        k\right)} \,\int_{r=ab}^\infty r^{k-1} e^{-r} dr.
  \end{align}
  Indeed, in $k=1$ dimension, 
  \begin{align*}
    \int_{\| \xi\|_2 > a } \exp \{ -b\|\xi\|_2  \} d\xi 
    &= 2\,\int_{r=a}^\infty  e^{-br} dr 
    = \frac{2}{b} e^{-ab},
  \end{align*}
  which is what (\ref{eq:polar}) evaluates to.
  If $k\ge 2$, then using polar coordinates
  \cite[Exercises 7.1-7.3]{Anderson:2003}, we find that
  \begin{align*}
    \int_{\| \xi\|_2 > a } \exp \{ -b\|\xi\|_2  \} d\xi 
    &= 2\pi \,\int_{r=a}^\infty r^{k-1} e^{-br} dr \cdot \prod_{i=1}^{k-2}
      \int_{-\pi/2}^{\pi/2} \cos^{i}(\theta_i) d\theta_i\\
    &= 2\pi \,\int_{r=a}^\infty r^{k-1} e^{-br} dr \cdot \prod_{i=1}^{k-2}
      \frac{\sqrt{\pi}\;\Gamma\left( \frac{1}{2} (i+1)
      \right)}{\Gamma\left( \frac{1}{2}
      (i+2)\right)}, 
  \end{align*}
  which again agrees with the formula from~(\ref{eq:polar}).  

  Now, the integral on the right-hand side
  of~(\ref{eq:polar}) defines the upper incomplete Gamma function and
  can be bounded as
  \begin{align*}
    \Gamma(k,ab) 
    &= \int_{r=ab}^\infty r^{k-1} e^{-r} dr
    \le 
    2 e^{-ab} (ab)^{k-1}
  \end{align*}
  for $ab\ge 2(k-1)$; see inequality (3.2) in \cite{Natalini:2000}.
  This gives the bound that was to be proven.
\end{proof}

\end{document}